 \newtheorem{thm}{Theorem}[section]
 \newtheorem{cor}[thm]{Corollary}
 \newtheorem{lem}[thm]{Lemma}
 \theoremstyle{definition}
 \theoremstyle{remark}
 \numberwithin{equation}{section}
\newcommand{\abs}[1]{\left\vert#1\right\vert}
\newcommand{\G}{\mathsf{\Gamma}}
\newcommand{\B}{\mathsf{B}}
\newcommand{\rl}{{\mathbb{R}}}
\newcommand{\tmop}[1]{\ensuremath{\operatorname{#1}}}
\renewcommand{\Re}{\tmop{Re}}
\DeclareMathOperator{\sech}{sech}
\begin{document}

%
%
%
%
%
%
%
%
%

\title[A remarkable formula of Ramanujan]
 {On a remarkable formula of Ramanujan}

\author[D. Chakrabarti]{Debraj Chakrabarti}

\address{TIFR Centre for Applicable Mathematics, Sharada Nagar, Chikkabommasandra, Bengaluru- 560065, India.}

\email{debraj@math.tifrbng.res.in}

\author[G. K.  Srinivasan]{Gopala Krishna Srinivasan}
\address{Department of Mathematics, Indian Institute of Technology Bombay, Powai,  Mumbai-400076, India}
\email{gopal@math.iitb.ac.in}
\subjclass{33B15}

\keywords{Gamma function, Fourier transforms}


\begin{abstract}
A simple proof of Ramanujan's formula for the Fourier transform of 
$\abs{\G(a + it)}^2$ is given where $a$ is fixed and has positive real part
 and $t$ is real. The result is extended to other values of $a$ by solving an inhomogeneous ODE and 
we use it to calculate the jump across the imaginary axis. 
\end{abstract}

\maketitle

\section{Introduction}

The behavior of the modulus of the gamma function along vertical lines plays an important role in analytic 
number theory (\cite[pp 767-775]{landau}.) Whenever the real number $a$ is not a negative integer or zero, the function 
\[
t \mapsto \abs{\G(a + it)}^2\] 
is an even function in the  Schwartz class, i.e., it decays rapidly at infinity along with all derivatives.  When $a>0$, one can prove
the following estimate due to  Lerch (\cite[p. 15]{godefroy}): 
\[
\abs{\G(a + it)} = \frac{\lambda\G(1+a)}{\sqrt{a^2 + t^2}}\sqrt{\frac{t}{\sinh\pi t}}, 
\]
where $\lambda$ satisfies $1 < \lambda < \sqrt{1+t^2}$, and if $a=\frac{1}{2}$, we have the closed form expression
\[ \abs{\G\left(\frac{1}{2}+it\right)}^2 = \frac{\pi}{\cosh \pi t}.\]

It is natural to ask what we can say about the Fourier transform of the Schwartz class function $t\mapsto \abs{\G(a + it)}^2$.
The following beautiful formula was proved by Srinivasa Ramanujan in \cite{ramanujan1}. For $a>0$, we have
\begin{equation}\label{eq-1p1}
\int_{-\infty}^{\infty} \abs{\G(a + it)}^2 \exp(-i\xi t) dt = 
\sqrt{\pi}\G(a)\G\left(a + \frac{1}{2}\right)\left(\mbox{cosh}\frac{\xi}{2}\right)^{-2a}.
\end{equation}
This  gives the Fourier transform of the rapidly decreasing (i.e., Schwartz class)  function $t\mapsto \abs{\G(a + it)}^2$, and by Fourier Inversion,
also the Fourier transform of $\xi\mapsto \left(\mbox{cosh}\frac{\xi}{2}\right)^{-2a}$. The existence of 
either of these transforms as a closed form expression is a minor miracle. Setting $a=\frac{1}{2}$ in \eqref{eq-1p1} gives
the well-known formula
\[ \int_{-\infty}^\infty\frac{1}{\cosh\pi t} e^{-i\xi t} dt = \frac{1}{\cosh\frac{\xi}{2}},\]
revealing the close relation with the Gaussian function.

The integral \eqref{eq-1p1} is the simplest in a class of integrals considered by 
Mellin, Ramanujan, Hardy,  Hecke and others (\cite[p.~98, pp.~103-104, and p.~492]{hardy}). Mellin studied 
integrals of this form
 in connection with representations of solutions of first order difference equations (\cite[pp. 344-345]{mellin1}) and 
solutions of (generalized) hyper-geometric equations (\cite[ pp 148-149]{mellin2}).  Integrals 
of this form also play an important role in analytic number theory (\cite[ p. 152]{hardy-littlewood}) 
and in the theory of real quadratic number 
fields (\cite[p.~58]{rademacher}). 

In this note we consider 
what happens to the formula \eqref{eq-1p1} when $-1< \Re (a) <0$.  It is clear that the formula as it stands must 
fail when $\Re (a)$ is negative and $a \notin \mathbb Z$ 
since for such $a$, the function  $t\mapsto |\G(a+it)|^2$ is rapidly decreasing, but 
$\xi\mapsto\cosh^{-2a}\xi$ on the right hand side blows up exponentially. 
We obtain, using Fourier analysis, an inhomogeneous ordinary differential equation for the integral when 
$-1 < \Re (a) < 0$. The formula assumes a very different form when $\Re (a) < 0$. 
To make the paper self-contained we offer using techniques of Fourier analysis, an alternate proof of 
 \eqref{eq-1p1} 
when $\Re (a)>0$.

We will prove Ramanujan\rq{}s formula in the following  form,
where we allow $a$ to be complex:
\begin{thm} \label{thm-main} When $\Re (a) > 0$, 
\begin{equation}\label{eq-2p2}
\int_{-\infty}^{\infty} \G(a + it)\G(a-it) \exp(-i\xi t) dt = \frac{2\pi\G(2a)}{4^a\cosh^{2a}\left(\frac{\xi}{2}\right)}. \end{equation}
\end{thm}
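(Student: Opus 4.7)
The plan is to interpret the product $\Gamma(a+it)\Gamma(a-it)$ as the product of two Fourier transforms, and then apply the convolution theorem to reduce the integral to an elementary convolution of explicit functions on $\mathbb{R}$.

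First I would express each factor as a Fourier transform on the line. Starting from the integral representation $\Gamma(a-it)=\int_0^\infty e^{-u}u^{a-it-1}\,du$ and substituting $u=e^x$, I obtain
\[
\Gamma(a-it)=\int_{-\infty}^\infty g(x)\,e^{-itx}\,dx,\qquad g(x):=\exp(-e^x+ax).
\]
The analogous substitution $u=e^{-x}$ in $\Gamma(a+it)$ yields
\[
\Gamma(a+it)=\int_{-\infty}^\infty f(x)\,e^{-itx}\,dx,\qquad f(x):=\exp(-e^{-x}-ax).
\]
Both $f$ and $g$ lie in $L^1(\mathbb R)$ when $\Re(a)>0$: the double exponential kills one tail, and $e^{\pm ax}$ controls the other. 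Their Fourier transforms $\Gamma(a\pm it)$ are Schwartz class (by Lerch's estimate recalled in the introduction), hence also in $L^1$.

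Next I would invoke the convolution theorem. Since $f,g\in L^1$ and $\widehat f\,\widehat g\in L^1$, Fourier inversion gives
\[
\int_{-\infty}^\infty \Gamma(a+it)\Gamma(a-it)\,e^{-i\xi t}\,dt
=2\pi\,(f*g)(-\xi).
\]
The final step is to compute the convolution explicitly. By definition
\[
(f*g)(y)=e^{ay}\int_{-\infty}^\infty \exp\!\bigl(-e^{-x}-e^{y-x}\bigr)\,e^{-2ax}\,dx,
\]
and the substitution $u=e^{-x}$ collapses the inner integral to
\[
\int_0^\infty e^{-u(1+e^y)}\,u^{2a-1}\,du=\frac{\Gamma(2a)}{(1+e^y)^{2a}}.
\]
Using the identity $1+e^y=2e^{y/2}\cosh(y/2)$, the factors $e^{ay}$ cancel and one is left with $(f*g)(y)=\Gamma(2a)/\bigl(4^a\cosh^{2a}(y/2)\bigr)$. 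Evaluating at $-\xi$ and using that $\cosh$ is even yields \eqref{eq-2p2}.

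The only real obstacle is bookkeeping: one must keep the Fourier/inverse-Fourier conventions straight (which determines where the factor $2\pi$ lands and whether one evaluates the convolution at $\xi$ or $-\xi$), and one must verify the integrability hypotheses needed to apply the convolution theorem, both of which hinge on $\Re(a)>0$. Once the two substitutions $u=e^{\pm x}$ are in place, everything else is a short calculation.
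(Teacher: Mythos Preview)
Your argument is correct and takes a genuinely different route from the paper's.  The paper writes $\Gamma(a+it)\Gamma(a-it)=\Gamma(2a)\,\B(a+it,a-it)$, applies Binet's integral representation of the Beta function (Lemma~\ref{lem-binet}) to get a double integral, and then inserts a Gaussian convergence factor $e^{-\epsilon t^2}$ so that Fubini can be applied; after computing the Gaussian Fourier transform, the $\epsilon\to 0^+$ limit is read off either by dominated convergence or as a delta distribution.  You instead recognise each factor $\Gamma(a\pm it)$ as the Fourier transform of an explicit $L^1$ function via the substitution $u=e^{\pm x}$ in Euler's integral, invoke the convolution theorem, and evaluate the resulting one-dimensional convolution by a gamma integral.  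Your route is shorter and bypasses the regularisation entirely; the paper's route has the virtue of isolating Binet's formula as a reusable lemma and of illustrating the Gaussian mollifier technique explicitly.  One small point: your appeal to Lerch's estimate for the rapid decay of $\Gamma(a\pm it)$ is stated in the introduction only for real $a$; for complex $a$ with $\Re(a)>0$ you should instead invoke Stirling's asymptotic, or simply observe that your functions $f$ and $g$ are in fact Schwartz (all derivatives decay exponentially at both ends), so that $\widehat f,\widehat g\in\mathcal S$ and the inversion hypothesis is automatic.
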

When $a$ is real,  the left hand side of \eqref{eq-2p2} reduces to that of \eqref{eq-1p1}, 
since $\overline{\G(z)} = \G({\overline{z}})$ for all $z$ in the domain of 
$\G$, and therefore
\[ \G(a + it)\G(a - it)=\abs{\G(a+it)}^2  ,
\]
and in the right hand side, the two expressions coincide, thanks to the classical duplication 
formula for the gamma function  (\cite[p. 35]{remmert}, \cite[formula~3.8]{srinivasan}):
\begin{equation}\label{dup}
\sqrt{\pi}\G(2a) = 2^{2a-1}\G(a)\G\left(a+\frac{1}{2}\right).
\end{equation}

If we try to prove Theorem~\ref{thm-main} by formally computing the Fourier transform, we soon
run into a familiar obstacle, namely, the appearance of an oscillatory integral  
\[
\int_{-\infty}^{\infty} \exp(i\xi(x-y)) dx.
\]
The standard way to cope with this (see \cite[p. 37]{strichartz})
is to introduce a Gaussian convergence factor $\exp(-\epsilon x^2)$ (with $\epsilon>0$) 
in the integrand, compute the integral, and let 
let $\epsilon \rightarrow 0^+$.  This is done in Section~\ref{sec-main} below.

We then ask how to modify \eqref{eq-2p2} when $\Re(a)$ is negative.  We show that in  Section~\ref{sec-jump}
the question can be reduced to the solution of an inhomogeneous linear  ODE, whose solution 
can be reduced to (repeated) quadratures.  When $-1<\Re(a)<0$ we prove the following:
\begin{thm} \label{thm-negative}When $\Re(a) \in (-1, 0)$,  we have
\begin{align} \int_{-\infty}^{\infty} \G(a + it)\G(a-it) \exp(-i\xi t) dt &=&\nonumber\\
-\sqrt{\pi}\G(a+1)\G\left(a+\frac{3}{2}\right)
\left\{
\cosh(a\xi)\int_0^{\infty}\frac{e^{as}}{\cosh^{2a+2}\left(\frac{s}{2}\right)}ds\right. + 
&\left.\int_0^{\xi}\frac{\cosh\left(a(\xi - s)\right)}{\cosh^{2a+2}\left(\frac{s}{2}\right)} ds\right\}\label{eq-extension}
\end{align}
\end{thm}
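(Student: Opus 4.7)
Denote the left hand side of \eqref{eq-extension} by $F(a,\xi)$, so that $F(a,\cdot)$ is the Fourier transform of $g_a(t) := \G(a+it)\G(a-it)$; this is a Schwartz function because $|\G(a+it)| \sim \sqrt{2\pi}|t|^{\Re(a)-1/2}e^{-\pi|t|/2}$ as $|t|\to\infty$. The plan is to derive a second order inhomogeneous linear ODE in $\xi$ satisfied by $F(a,\cdot)$, evaluate the forcing term in closed form using Theorem~\ref{thm-main}, and then integrate by quadrature, fixing the two constants of integration by the evenness and decay of $F$.

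Applying the functional equation $\G(z+1) = z\G(z)$ to both factors of $g_a$ yields
\[
(a^2 + t^2)\,g_a(t) \;=\; \G(a+1+it)\,\G(a+1-it) \;=\; g_{a+1}(t).
\]
Termwise Fourier transformation (justified by Schwartz class membership of $g_a$ and $t^2 g_a$) together with the rule $\mathcal F[t^2 f] = -\partial_\xi^2 \mathcal F[f]$ then gives
\[
\partial_\xi^2 F(a,\xi) - a^2\,F(a,\xi) \;=\; -F(a+1,\xi).
\]
Because $\Re(a+1) \in (0,1)$, Theorem~\ref{thm-main} combined with the duplication formula \eqref{dup} evaluates the right hand side as $-\sqrt{\pi}\,\G(a+1)\G(a+3/2)/\cosh^{2a+2}(\xi/2)$.

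I would then solve this ODE by factoring $\partial_\xi^2 - a^2 = (\partial_\xi - a)(\partial_\xi + a)$ and integrating the resulting first order equations successively, with the integrating factors $e^{\mp a\xi}$ supplying the two quadratures advertised in the introduction. The general solution is a linear combination of $\cosh(a\xi)$ and $\sinh(a\xi)$ plus a particular solution given by iterated integrals of $e^{\pm as}/\cosh^{2a+2}(s/2)$. The symmetry $g_a(-t) = g_a(t)$ forces $F(a,\cdot)$ to be even, killing the $\sinh(a\xi)$ component. The $\cosh(a\xi)$ coefficient is then fixed uniquely by the decay $F(a,\xi)\to 0$ as $|\xi|\to\infty$: for $\Re(a) < 0$ both $\cosh(a\xi)$ and the natural particular solution grow like $e^{-a\xi}$ at $+\infty$, and matching the tails produces precisely the constant $\int_0^\infty e^{as}/\cosh^{2a+2}(s/2)\,ds$ appearing in \eqref{eq-extension}.

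The principal technical point is to arrange the quadratures so that the integrals converge uniformly in the whole strip $-1 < \Re(a) < 0$. A naive symmetric variation-of-parameters splitting involves the integrand $e^{-as}/\cosh^{2a+2}(s/2) \sim e^{-(2a+1)s}$, whose improper integral at infinity diverges whenever $\Re(a) \le -\tfrac{1}{2}$. Grouping the pieces so that only $e^{as}/\cosh^{2a+2}(s/2) \sim e^{-s}$ appears in the open integral -- exactly the form seen in the statement -- restores uniform convergence throughout the strip and cleanly separates the boundary condition at $+\infty$ from the $\xi$-dependent contribution. Once this grouping is in place, \eqref{eq-extension} follows from a routine simplification together with the identity $\G(a+1) = a\G(a)$ used to rewrite the overall constant.
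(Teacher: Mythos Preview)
Your approach is essentially the same as the paper's: derive the inhomogeneous ODE $F''-a^2F=-F(a+1,\cdot)$ from the functional equation $(a^2+t^2)g_a=g_{a+1}$, evaluate the forcing term via Theorem~\ref{thm-main}, solve by quadrature, and fix the constants by decay at infinity. The paper works with $I(a,\xi)=aF(a,\xi)$ and uses standard variation of parameters plus the Riemann--Lebesgue lemma at both $\xi\to\pm\infty$ to find $A$ and $B$ separately (they then coincide); your use of evenness to eliminate the $\sinh(a\xi)$ term up front and your remark about arranging the open integral to contain only $e^{as}/\cosh^{2a+2}(s/2)$ (so that it converges throughout the strip) are small but welcome refinements of what the paper does somewhat implicitly.
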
 
As a consequence, we find the jump in the Fourier transform of $t\mapsto \G(a-it)\G(a+it)$ as\
$a$ crosses the imaginary axis in Corollary~\ref{cor-jump}.

Theorem~\ref{thm-negative} shows that when $a<0$, the Fourier transform is no longer in closed form, but can 
still be expressed in terms of quadratures. It will be seen in the proof that this method can be extended to obtain 
similar formulas for vertical strips of the form $-(n+1)<\Re(a)<-n$, for any positive integer $n$ in terms of multiple 
quadratures.

\section{ Proof of Theorem~\ref{thm-main}} \label{sec-main}

We begin with a useful formula that goes back to 
Binet (\cite[p.136]{binet}).  We recall  that the beta function $\B(p, q)$ is defined for 
$\Re(p)>0$ and $\Re(q)>0$ to be 
\[
\B(p, q) = \int_0^1 s^{p-1}(1-s)^{q-1} ds,
\]
and the basic relation between the beta and gamma functions (\cite[p. 35]{remmert}, \cite[p. 313]{srinivasan}):
\begin{equation}\label{beta-gamma}
\G(p)\G(q) = \G(p+q)\B(p, q).
\end{equation}
\begin{lem}[Binet] \label{lem-binet}When $\Re(p) > 0$ and  $\Re(q) > 0$, 
\begin{equation}\label{eq-2p1}
\B(p, q) = \int_{-\infty}^{\infty}\frac{e^{(p-q)u} + e^{(q-p)u}}{(e^u + e^{-u})^{p+q}}  du.
\end{equation}\end{lem}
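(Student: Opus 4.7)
The plan is to derive the identity by transforming the standard integral representation of $\B(p,q)$ via a single explicit substitution, and then symmetrizing the resulting integrand.

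First, I would recall the alternative representation
\[
\B(p,q) = \int_0^{\infty}\frac{t^{p-1}}{(1+t)^{p+q}}\,dt,
\]
which is obtained from the definition of the beta function by the substitution $s = t/(1+t)$. This is the natural intermediate form because the denominator $(1+t)^{p+q}$ is what will be transformed into $(e^u+e^{-u})^{p+q}$.

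Next, I would substitute $t = e^{2u}$, $dt = 2e^{2u}\,du$, in the above integral. After collecting the exponentials in the numerator, the integral becomes
\[
\B(p,q) = 2\int_{-\infty}^{\infty}\frac{e^{2pu}}{(1+e^{2u})^{p+q}}\,du.
\]
Pulling the factor $e^{u(p+q)}$ out of the denominator yields
\[
\B(p,q) = 2\int_{-\infty}^{\infty}\frac{e^{u(p-q)}}{(e^u+e^{-u})^{p+q}}\,du.
\]
The change of variable $u \mapsto -u$ leaves the denominator invariant and replaces $p-q$ by $q-p$, so the same integral equals $2\int_{-\infty}^{\infty}e^{u(q-p)}(e^u+e^{-u})^{-(p+q)}\,du$. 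Averaging the two expressions for $\B(p,q)$ produces the claimed identity \eqref{eq-2p1}.

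There is no serious obstacle here; the proof is essentially a bookkeeping exercise in substitution. The only point that deserves verification is convergence of the resulting integrals when $p,q$ are complex with positive real parts. The integrand in the final formula behaves like $e^{-2u\Re(q)}$ as $u\to+\infty$ and like $e^{2u\Re(p)}$ as $u\to-\infty$, so both integrals are absolutely convergent under the stated hypotheses, and the substitution $u\mapsto-u$ is justified.
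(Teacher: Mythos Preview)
Your proof is correct and follows essentially the same route as the paper's. The paper substitutes $s=(1+e^{2u})^{-1}$ directly into $\int_0^1 s^{p-1}(1-s)^{q-1}\,ds$ (your two substitutions $s=t/(1+t)$ and $t=e^{2u}$ compose to $s=e^{2u}/(1+e^{2u})=1-(1+e^{2u})^{-1}$, the same up to $s\leftrightarrow 1-s$), and then symmetrizes using $\B(p,q)=\B(q,p)$ rather than $u\mapsto -u$; these are cosmetic differences only.
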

\begin{proof} We set $s = (1+e^{2u})^{-1}$ in the integral defining $\B(p, q)$: 
\begin{align*}
\B(p, q)& =  2\int_{-\infty}^{\infty} \frac{e^{2uq}}{(1 + e^{2u})^{p+q}}du\\ 
&= 
\int_{-\infty}^{\infty} \frac{e^{2up} + e^{2uq}}{(1+e^{2u})^{p+q}}du,
\end{align*}
since the beta function is symmetric in $p$ and $q$. Multiplying the 
numerator and denominator of the integrand in the  last integral by $\exp\left(-u(p+q)\right)$, we get 
\eqref{eq-2p1}. 
\end{proof}

We now prove Theorem~\ref{thm-main}. By virtue of the beta-gamma relation \eqref{beta-gamma}:
\[
\G(a + it)\G(a - it) = \B(a + it, a -it)\G(2a).
\]
Invoking Lemma~\ref{lem-binet},  we  see that the left hand side 
\[ \int_{-\infty}^{\infty} \G(a + it)\G(a-it) \exp(-i\xi t) dt\]
of \eqref{eq-2p2} is equal to 
\begin{align}
 &\G(2a)\int_{-\infty}^{\infty}\exp(-it\xi)\left(\int_{-\infty}^{\infty}
\frac{(e^{2itu} + e^{-2itu})}{(e^u + e^{-u})^{2a}}du\right)dt\nonumber\\
&= 
\G(2a)\int_{-\infty}^{\infty}\lim_{\epsilon\rightarrow 0^+}\left\{\exp(-\epsilon t^2 - it\xi)\left(\int_{-\infty}^{\infty}
\frac{(e^{2itu} + e^{-2itu})}{(e^u + e^{-u})^{2a}}du\right)\right\}dt,\label{eq-dctjust1}
\end{align}
Where in the last line, we have inserted a convergence factor $\exp(-\epsilon t^2)$  (with $\epsilon>0$).
Note that the function
\[
t \mapsto \int_{-\infty}^{\infty} \frac{e^{2itu} + e^{-2itu}}{(e^u + e^{-u})^{2a}} \;du
\]
is rapidly decreasing, since it can be interpreted as the sum of Fourier transforms of 
rapidly decreasing functions, and therefore is in particular in $L^1$. Therefore, we are justified in 
applying the dominated convergence theorem, and writing \eqref{eq-dctjust1} as
\begin{align*}
 &\G(2a)\;\lim_{\epsilon\rightarrow 0}
\int_{-\infty}^{\infty}\exp(-\epsilon t^2 - it\xi) \left(
\int_{-\infty}^{\infty}\frac{e^{2itu} + e^{-2itu}}{(e^u + e^{-u})^{2a}}du \right)dt \\
&= \G(2a) \;\lim_{\epsilon\rightarrow 0}
\int_{-\infty}^{\infty}\frac{1}{(e^u + e^{-u})^{2a}}\left(
\int_{-\infty}^{\infty}\left(e^{-it(-2u+\xi)-\epsilon t^2}+e^{-it(2u+\xi)-\epsilon t^2}\right)dt\right) du,\\
\end{align*}
where in the last line the use of Fubini\rq{}s theorem to interchange the order of integration is easily justified, since $t\mapsto \exp(-\epsilon t^2)$
is integrable.  Inserting the well-known formula (\cite[p.~41]{strichartz}) 

\[ 
\int_{-\infty}^{\infty} \exp(-\epsilon t^2 - ict)dt = \sqrt{\frac{\pi}{\epsilon}}\exp\left(-\frac{c^2}{4\epsilon}\right)
\]
for the Fourier transform of the Gaussian into each term of the second integral factor,  we get
\begin{equation}\label{eq-2int}
\sqrt{\pi}\G(2a)\;\lim_{\epsilon\rightarrow 0}\left\{ 
\int_{-\infty}^{\infty}\frac{e^\frac{{-(2u-\xi)^2}}{4\epsilon}}{\sqrt{\epsilon}}\cdot\frac{1}{(e^u + e^{-u})^{2a}}du + 
\int_{-\infty}^{\infty}\frac
{e^\frac{-(2u+\xi)^2}{4\epsilon}}{\sqrt{\epsilon}}\cdot\frac{1}{(e^u + e^{-u})^{2a}}du
\right\}
\end{equation}
The change of variables $u = \frac{\xi}{2} + \sqrt{\epsilon}\;w$ transforms the first integral in \eqref{eq-2int} into 
\begin{equation}\label{eq-3int}
\int_{-\infty}^{\infty} 
\frac{e^{-w^2}dw}{(e^{\frac{\xi}{2} + \sqrt{\epsilon}\;w} + e^{-\frac{\xi}{2} - \sqrt{\epsilon}\;w})^{2a}}.
\end{equation}
Since the denominator is always greater than or equal to 1, in this integral,  the integrand 
is dominated by the $L^1$ function $w\mapsto e^{-w^2}$.  We may therefore  apply the 
dominated convergence theorem and conclude that the integral \eqref{eq-3int}
converges as $\epsilon\rightarrow 0^+$ to 
\begin{equation}\label{eq-limit}\sqrt{\pi}\cdot \frac{1}{\left(e^{\frac{\xi}{2}} + e^{-\frac{\xi}{2}}\right)^{2a}} =\frac{\sqrt{\pi}}{{4^a}\cosh^{2a}\left(\frac{\xi}{2}\right)}.\end{equation}

The change of variables $v= -u$ shows that the second integral in \eqref{eq-2int} is equal to the first for each $\epsilon>0$, and therefore,
its limit as $\epsilon\to 0^+$ is also equal \eqref{eq-limit}, and therefore, 
the upshot of our computation is that the expression in \eqref{eq-2int}, and therefore, the Fourier transform on the left hand side of
\eqref{eq-2p2} is equal to
\[ \frac{2\pi\G(2a)}{4^a\cosh^{2a}\left(\frac{\xi}{2}\right)} ,\]
which is the required right hand side.

{\bf Remark:} That the first integral in \eqref{eq-2int} converges as $\epsilon\to 0^+$ to the quantity in \eqref{eq-limit}
may also be deduced from the following fact: for a fixed $\xi$, as $\epsilon\to 0^+$, the first factor 
$u\mapsto \epsilon^{-\frac{1}{2}} e^{\frac{{-(2u-\xi)^2}}{4\epsilon}}$ of the first integral in \eqref{eq-2int} converges  in the sense 
of distributions to $\sqrt{\pi}\delta_{\frac{\xi}{2}}$, where  for $p\in\rl$,  we denote by $\delta_p$ the delta distribution on $\rl$ supported 
at the point $p$.


\section{ Proof of Theorem~\ref{thm-negative}} \label{sec-jump}

Suppose that $\Re(a)$ is not a negative integer or 0. For real $\xi$, we define 
\begin{equation}\label{fourier}
I(a,\xi) =  \int_{-\infty}^{\infty} a \G(a-it)\G(a+it) e^{-it\xi} dt.
\end{equation}
This is well-defined, since the integrand is easily seen to be a rapidly decreasing function of $t$.
When $\Re(a)>0$, Theorem~\ref{thm-main} along with the duplication formula \eqref{dup} shows that  
\begin{equation}\label{eq-mainbis} 
I(a,\xi) = a\Phi(a) \sech^{2a}\left(\frac{\xi}{2}\right),
\end{equation}
where we denote 
\[ \Phi(a)= \sqrt{\pi}\G(a)\G\left(a+\frac{1}{2}\right),\]
and  $\sech(t)= \left(\cosh(t)\right)^{-1}$. The basic property of the function $I(a,\xi)$ is given in the following lemma:
\begin{lem}
If $\Re(a)$ is not a negative integer or zero, we have
\begin{equation}\label{eq-ode-iaxi}
\left(\frac{d^2}{d\xi^2} - a^2\right)I(a, \xi)= \left(\frac{-a}{a+1}\right)I(a+1,\xi).\end{equation}
\end{lem}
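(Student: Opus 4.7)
The plan is to prove the ODE by differentiating under the integral sign and then using the basic functional equation $z\G(z)=\G(z+1)$ for the gamma function. First I would note that the integrand $a\,\G(a-it)\G(a+it)e^{-it\xi}$ is, for fixed $a$ with $\Re(a)$ not a non-positive integer, a Schwartz function of $t$; indeed the Stirling-type estimate alluded to in the introduction (or the Lerch bound) shows that $t\mapsto\G(a\pm it)$ decays faster than any polynomial. This decay persists after multiplying by any fixed power of $t$, which is all that will be needed to justify two differentiations under the integral sign.

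Next, applying $\partial_\xi^2$ under the integral sign brings down a factor $(-it)^2=-t^2$, so that
\begin{equation*}
\left(\frac{d^2}{d\xi^2}-a^2\right)I(a,\xi)=-a\int_{-\infty}^{\infty}(t^2+a^2)\,\G(a-it)\G(a+it)\,e^{-it\xi}\,dt.
\end{equation*}
The key algebraic observation is the factorization
\begin{equation*}
t^2+a^2=(a-it)(a+it),
\end{equation*}
which allows the factors $(a-it)$ and $(a+it)$ to be absorbed into the gammas via $(a\pm it)\G(a\pm it)=\G(a+1\pm it)$. The integral then becomes
\begin{equation*}
-a\int_{-\infty}^{\infty}\G(a+1-it)\G(a+1+it)\,e^{-it\xi}\,dt.
\end{equation*}

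Finally I would compare this to $I(a+1,\xi)$. By the definition in \eqref{fourier},
\begin{equation*}
I(a+1,\xi)=(a+1)\int_{-\infty}^{\infty}\G(a+1-it)\G(a+1+it)\,e^{-it\xi}\,dt,
\end{equation*}
so the preceding expression equals $\dfrac{-a}{a+1}I(a+1,\xi)$, which is \eqref{eq-ode-iaxi}.

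The only nontrivial point is the passage of $\partial_\xi^2$ inside the integral, which I expect to be the main obstacle in a fully rigorous write-up; this is handled by dominated convergence once one observes that $t^2\G(a-it)\G(a+it)$ is integrable in $t$ uniformly for $\xi$ in a neighborhood, a consequence of the Schwartz decay noted at the outset. No hypothesis on the sign of $\Re(a)$ is used beyond avoiding the poles of $\G$, so the identity holds throughout the stated range.
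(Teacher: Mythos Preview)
Your proof is correct and uses essentially the same approach as the paper: differentiation under the integral sign combined with the functional equation $z\G(z)=\G(z+1)$ and the factorization $a^2+t^2=(a-it)(a+it)$. The only cosmetic difference is the order of operations---the paper first rewrites $\G(a\pm it)=\G(1+a\pm it)/(a\pm it)$ to introduce $1/(a^2+t^2)$ and then differentiates to cancel it, whereas you differentiate first to produce $a^2+t^2$ and then absorb it into the gammas; the net effect is identical.
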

\begin{proof}
Using the functional relation $\G(z)= \displaystyle{\frac{\G(z+1)}{z}}$ twice, the right hand side of 
 \eqref{fourier} can be rewritten as 
\begin{equation}\label{fourier1}
I(a,\xi) =  \int_{-\infty}^{\infty} \left(\frac{a}{a^2 + t^2}\right) \G(1+a-it)\G(1+a+it) e^{-it\xi} dt.
\end{equation}
To get rid of the factor $a^2 + t^2$ in the denominator in the integrand in \eqref{fourier1} we 
differentiate under the integral twice with respect to  $\xi$, and subtract $a^2 I(a,\xi)$.  We obtain 
\begin{align*}
\left(\frac{d^2}{d\xi^2} - a^2\right)I(a, \xi)& = \int_{-\infty}^{\infty} \left(\frac{-at^2}{a^2 + t^2}\right) \G(1+a-it)\G(1+a+it) e^{-it\xi} dt\\
&\phantom{a^2+t^2}-\int_{-\infty}^{\infty} \left(\frac{a^3}{a^2 + t^2}\right) \G(1+a-it)\G(1+a+it) e^{-it\xi} dt\\
&=-a \int_{-\infty}^{\infty}\G(1+a-it)\G(1+a+it) e^{-it\xi} dt \\
&=\left(\frac{-a}{a+1}\right)\int_{-\infty}^{\infty}(a+1)\G(1+a-it)\G(1+a+it) e^{-it\xi} dt\\
&= \left(\frac{-a}{a+1}\right)I(a+1,\xi).
\end{align*}
\end{proof}
We now consider the situation in which $\Re(a)>-1$, and $\Re(a)\not=0$. Then $\Re(a+1)>0$, and consequently,
putting the value of $I(a+1,\xi)$ from  \eqref{eq-mainbis} into \eqref{eq-ode-iaxi} we obtain 
that
\[ \left(\frac{d^2}{d\xi^2} - a^2\right)I(a, \xi)= 
\left(\frac{-a}{a+1}\right) \cdot(a+1)\Phi(a+1)\sech^{2(a+1)}\left(\frac{\xi}{2}\right).\]
We have thus obtained the following inhomogeneous linear differential equation 
satisfied by the function $\xi\mapsto I(a,\xi)$:
\begin{equation}\label{ode}
\left(\frac{d^2}{d\xi^2} - a^2\right)I(a, \xi) = 
-a\Phi(a+1){\sech}^{2a+2}\frac{\xi}{2}. 
\end{equation}
The general solution of the associated homogeneous equation \[ \left(\frac{d^2}{d\xi^2} - a^2\right)I(a, \xi) = 0\]
 is $\xi\mapsto Ae^{a\xi} + Be^{-a\xi}$, where $A$ and $B$ 
are constants. 
We proceed to find a particular solution of \eqref{ode} by the standard 
method of variation of parameters. We seek the particular integral in the form 
\begin{equation}\label{part-int}
\xi\mapsto v_1(\xi)e^{a\xi} + v_2(\xi)e^{-a\xi}, 
\end{equation}
where the functions $v_1$ and $v_2$ are determined by the system  of equations 
\begin{align*}
v_1^{\prime}(\xi)e^{a\xi} + v_2^{\prime}(\xi)e^{-a\xi} =  &  0, \\ 
v_1^{\prime}(\xi)e^{a\xi} - v_2^{\prime}(\xi)e^{-a\xi} = & -\Phi(a+1)\mbox{sech}^{2a+2}\frac{\xi}{2}.\\
\end{align*}
Solving the above pair of equations for $v_1\rq{}$ and $v_2\rq{}$, representing $v_1$ and $v_2$ as 
indefinite integrals,  and  substituting in \eqref{part-int} gives the particular integral 
\begin{align*}
-\left(\frac{\Phi(a+1)}{2}\int_0^{\xi}{\sech}^{2a+2}\left(\frac{s}{2}\right)e^{-as}ds\right)\cdot e^{a\xi}  + \\
\left(\frac{\Phi(a+1)}{2}\int_0^{\xi}{\sech}^{2a+2}\left(\frac{s}{2}\right)e^{as}ds\right)\cdot e^{-a\xi}.
\end{align*}
The complete solution of \eqref{ode} is thus 
\begin{align}
I(a, \xi) &= e^{a\xi}\left(A - \frac{\Phi(a+1)}{2}\int_0^{\xi}\sech^{2a+2}\left(\frac{s}{2}\right)e^{-as}ds\right) +\nonumber\\ 
&\phantom{=}e^{-a\xi}\left(B + \frac{\Phi(a+1)}{2}\int_0^{\xi}\sech^{2a+2}\left(\frac{s}{2}\right)e^{as}ds\right). \label{ode-sol}
\end{align}The constants $A$ and $B$ are to be determined by side conditions such as initial or 
boundary conditions. 
The Riemann Lebesgue lemma (\cite[p. 106 ff]{strichartz}) which asserts  that  the Fourier transform of an integrable function decays at 
infinity, provides the requisite boundary conditions
\begin{equation}\label{riemann}
\lim_{\xi\rightarrow \infty}I(a, \xi) = 0,\quad \lim_{\xi\rightarrow -\infty}I(a, \xi) = 0 
\end{equation}
Taking the limits as $\xi\to\infty$ and $\xi\to -\infty$ in \eqref{ode-sol}, and 
using \eqref{riemann} we get the following values of $A$ and $B$, we see that if $\Re(a)<0$, we have
\begin{equation}\label{coeff}
A = B = -\frac{\Phi(a+1)}{2}\int_0^{\infty}{\sech}^{2a+2}\left(\frac{s}{2}\right)e^{as} ds.
\end{equation}
For use in the next section, we note that when $\Re(a)>0$, the same method gives us that
\begin{equation}\label{coeff2}
A = B = \frac{\Phi(a+1)}{2}\int_0^{\infty}\mbox{sech}^{2a+2}\left(\frac{s}{2}\right)e^{-as} ds. 
\end{equation}
Substitution of the values of $A$ and $B$ obtained in \eqref{coeff} in \eqref{ode-sol}, and simplifying the expression,
 gives us  that
for $-1<\Re(a)<0$, we have
\begin{align*}I(a, \xi) = -\Phi(a+1)\left\{
\int_0^{\xi}{\sech}^{2a+2}\left(\frac{s}{2}\right)\cosh\left(a(\xi - s)\right) ds +\right.\\ 
\left. \cosh a\xi\int_0^{\infty}\mbox{sech}^{2a+2}\left(\frac{s}{2}\right)e^{as}ds\right\},
\end{align*}
and this is equivalent to Theorem~\ref{thm-negative} in view of the definitions of $I(a,\xi)$ and $\Phi(a)$.

{\bf Remark:} Note that the method of proof can be used to generate  formulas for $I(a,\xi)$ (and consequently
the left hand side of \eqref{eq-extension}) for  $\Re(a)<-1$ by a recursive process. These formulas will involve repeated
quadratures.  For example, suppose  for a positive integer $n$  that $I(a,\xi)$ has been determined for $\Re(a)\in (-n,-n+1)$.
Then the right hand of \eqref{eq-ode-iaxi} is known when $\Re(a)\in (-n-1,n)$. Consequently, we have a 
linear inhomogeneous ordinary differential equation to find $I(a,\xi)$ when  $\Re(a)\in (-n-1,n)$,  and this
 can again be solved using the method of variation of parameters, yielding a solution in terms of multiple quadratures,
where the integrands now involve only elementary functions.
\section{The Jump Across the Imaginary Axis}
As we have seen in the previous section, for fixed real $\xi$, the left hand side of \eqref{eq-2p2} 
is a holomorphic function 
of $a$, provided $\Re(a)$ is not a negative integer or zero, and the technique of the last section may be used 
to obtain formulas representing this function in terms of integrals of elementary functions. Therefore we can find
the jump of this function of $a$ along the lines $\{\Re(a)=-n\}$, where $n$ is a non-negative integer. We now carry out this program
for the simplest case of $n=0$. 
Denote for convenience 
\[ J(a,\xi)=  \int_{-\infty}^{\infty}  \G(a-it)\G(a+it) e^{-it\xi} dt,\]
and for a point $p$ on the imaginary axis consider the jump\[
[[J(p, \xi)]] = \lim_{\substack{a\to p\\ \Re(a)>0}}J(a,\xi)-\lim_{\substack{a\to p\\ \Re(a)<0}}J(a,\xi).  \]

\begin{cor}\label{cor-jump} 
For each real $\xi$,  and for purely imaginary $p\not=0$, we have
\[
[[J(p, \xi)]] = 4\pi \cosh(p\xi) \G(2p).
\]
\end{cor}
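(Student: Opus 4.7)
The plan is to reduce the corollary to a linear-algebra computation at the level of the ODE \eqref{ode}. Write $I(a,\xi)=aJ(a,\xi)$ and let $I_\pm(p,\xi)$ denote the one-sided limits at $a=p$ from $\Re a>0$ and $-1<\Re a<0$ respectively. Since $p\ne 0$ is not a pole of $\Phi(p+1)$ and the driving term $-a\Phi(a+1)\sech^{2a+2}(\xi/2)$ of \eqref{ode} is continuous across $\{\Re a=0\}$, both $I_\pm(p,\cdot)$ satisfy the \emph{same} inhomogeneous equation. The variation-of-parameters piece in the general solution \eqref{ode-sol} depends only on $p$ and $\xi$ and so cancels in the difference, leaving
\[
I_+(p,\xi)-I_-(p,\xi)=(A_+-A_-)\,e^{p\xi}+(B_+-B_-)\,e^{-p\xi},
\]
where $A_\pm, B_\pm$ are read off from \eqref{coeff2} and \eqref{coeff} respectively.

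A direct subtraction of \eqref{coeff} from \eqref{coeff2} at $a=p$ gives
\[
A_+-A_-=B_+-B_-=\frac{\Phi(p+1)}{2}\int_0^\infty\!\sech^{2p+2}\!\left(\tfrac{s}{2}\right)\bigl(e^{-ps}+e^{ps}\bigr)\,ds=\Phi(p+1)\int_0^\infty\!\sech^{2p+2}\!\left(\tfrac{s}{2}\right)\cosh(ps)\,ds,
\]
so that, dividing by $p$,
\[
[[J(p,\xi)]]=\frac{2\Phi(p+1)\cosh(p\xi)}{p}\int_0^\infty\!\sech^{2p+2}\!\left(\tfrac{s}{2}\right)\cosh(ps)\,ds.
\]

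The remaining task is to evaluate the integral, which is a second application of Binet's Lemma~\ref{lem-binet}. Substituting $u=s/2$ and using $\sech^{2p+2}u=4^{p+1}/(e^u+e^{-u})^{2p+2}$, the evenness of the integrand produces
\[
\int_0^\infty\!\sech^{2p+2}\!\left(\tfrac{s}{2}\right)\cosh(ps)\,ds=2\cdot 4^p\int_{-\infty}^{\infty}\!\frac{e^{2pu}+e^{-2pu}}{(e^u+e^{-u})^{2p+2}}\,du=2\cdot 4^p\,\B(2p+1,1)=\frac{2\cdot 4^p}{2p+1}.
\]
The duplication formula \eqref{dup} at $a\mapsto p+1$ gives $\Phi(p+1)=\pi p(2p+1)\G(2p)/4^p$; substituting both values into the display for $[[J(p,\xi)]]$, the factors $4^p$, $p$ and $2p+1$ cancel pairwise and the answer collapses to $4\pi\cosh(p\xi)\G(2p)$.

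The one non-routine step is to justify that the one-sided limits of the integral \eqref{fourier} as $a\to p$ are indeed given by substituting $a=p$ into the formulas of Theorems~\ref{thm-main} and~\ref{thm-negative}. This is a dominated-convergence argument using Stirling's rapid decay of $\G(a\pm it)$ in $t$ uniformly for $a$ in a compact neighbourhood of $p$, together with the absolute convergence of the integrals in \eqref{eq-extension} on $\Re a>-2$.
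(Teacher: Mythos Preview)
Your proof is correct and follows essentially the same route as the paper: the jump of $I(\cdot,\xi)$ across the imaginary axis comes entirely from the difference of the constants in \eqref{coeff} and \eqref{coeff2}, the resulting integral is evaluated by Binet's formula \eqref{eq-2p1} with parameters $(2p+1,1)$, and the duplication formula \eqref{dup} completes the simplification. Your version packages the algebra a bit more compactly (computing $A_+-A_-=B_+-B_-$ once and dividing by $p$ at the end) and adds an explicit dominated-convergence justification for the one-sided limits, but the underlying argument is the same as the paper's.
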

\begin{proof}
Notice that, the function $J$ above and the function $I$ of \eqref{fourier} are related by
\[ aJ(a,\xi)= I(a,\xi).\]
Set $p= ic$, where $c\in \rl$, and let $[[I(ic,\xi)]]$ denote the jump across the imaginary axis:
\[ [[I(ic, \xi)]] = \lim_{\substack{a\to ic\\ \Re(a)>0}}I(a,\xi)-\lim_{\substack{a\to ic\\ \Re(a)<0}}I(a,\xi).\]
In order to prove the result, it suffices to show that  for every nonzero real $c$, we have
\[
[[I(ic, \xi)]] = 2\pi\cos c\xi\G(1+2ic).
\]

The integrals in \eqref{ode-sol} are continuous functions of $a$ since the real part of $a+1$ is positive. 
Hence contribution to the jump of $I(\cdot, \xi)$  at the point $ic$ on the imaginary axis   
arises from the discontinuity in the 
coefficients $A$ and $B$. From equations \eqref{coeff} and\eqref{coeff2}, the difference in the right and left hand limits of $Ae^{a\xi}$ is 
\begin{align*}
e^{ic\xi}&\Phi(1+ic)2^{2ic+1}
\int_0^{\infty}\frac{(e^{ics}+e^{-ics})ds}{(e^{s/2} + e^{-s/2})^{2ic+2}}\\& = 
e^{ic\xi}\Phi(1+ic)2^{2ic+1}
\int_{-\infty}^{\infty}\frac{(e^{2icu}+e^{-2icu})du}{(e^{u} + e^{-u})^{2ic+2}}.
\end{align*}
Using Binet's formula with $p = 2ic+1$ and $q = 1$ we simplify the last expression to 
\[
2^{1+2ic}\Phi(1+ic)e^{ic\xi}\B(1+2ic, 1) =  
2^{2ic}\sqrt{\pi}\G(1+ic)\G(ic + \frac{1}{2})e^{ic\xi}. 
\]
Using the duplication formula \eqref{dup} to simplify the last expression, 
we see that the difference in the right and left hand limits of 
$Ae^{a\xi}$ at $ic$ is 
\[
\pi e^{ic\xi} \G(1+2ic).
\]
Similarly, the difference in the right and left hand limits of $Be^{-a\xi}$ at $ic$ is 
\[
\pi e^{-ic\xi}\G(1+2ic).
\]
Thus we have 
\[
[[I(ic, \xi)]] =\pi(e^{ic\xi} + e^{-ic\xi})\G(1+2ic) = 2\pi\cos c\xi\G(1+2ic)
\]
as required. 
\end{proof}
\paragraph*{Acknowledgment} The authors  thank the referee for his helpful comments and suggestions.


\begin{thebibliography}{99}
\bibitem{berndt}
B. C. Berndt, {\em Ramanujan's notebooks, Part - II}, Springer Verlag, New York, 1989. 
\bibitem{binet}
M. J. Binet, 
{\em M\'emoire sur les int\'egrales d\'efinies euleriennes et sur leur application \`a la th\'eorie des suites}, Journal 
de l'Ecole polytechnique, (1839) 123-343.
\bibitem{godefroy}
M. Godefroy, 
{\em La fonction gamma; th\'eorie, histoire, bibliographie,}  Gauthier-Villars, Paris, 1901.
\bibitem{hardy}
G. H. Hardy, Collected papers - VII, Clarendon Press, Oxford, 1979. 
\bibitem{hardy-littlewood}
G. H. Hardy and J. E. Littlewood, 
{\em Contributions to the Riemann zeta function and the theory of distribution of primes,} Acta Mathematica, {\bf 41} 
119-196. 
\bibitem{landau}
E. Landau, 
{\em Handbuch der Lehre von der Verteilung der Primzahlen - II}, Chelsea publishing Co. New York, 1953. 
\bibitem{mellin1}
Hj. Mellin, 
{\em Zur Theorie der linearen Differenzengleichungen erster Ordnung,} Acta Mathematica, {\bf 15} (1891) 317-384.
\bibitem{mellin2}
Hj. Mellin, 
{\em \"Uber den Zusammenhang zwischen den linearen Differential- und Differenzengleichungen,} Acta Mathematica,  
{\bf 25} (1902) 139-164.  
\bibitem{rademacher}
H. Rademacher, 
{\em Topics in analytic number theory,} Springer Verlag, New York, 1973. 
\bibitem{ramanujan1} 
S. Ramanujan, {\em Some definite integrals}, Messenger of Mathematics {\bf 44} (1915) 10-18. 
\bibitem{remmert}
R. Remmert, 
{\em Classical topics in complex function theory,} Springer Verlag, New York, 1998. 
\bibitem{schwartz}
L. Schwartz, 
{\em Th\'eorie des distributions,} Hermann, Paris, 1966. 
\bibitem{srinivasan}
G. K. Srinivasan, The gamma function - an eclectic tour, American Mathematical Monthly, {\bf 114} (2007) 297-315. 
\bibitem{strichartz}
R. Strichartz, {\em A guide to distribution theory and Fourier transforms,} CRC press LLC, Boca Raton, 1994. 
\end{thebibliography}
\end{document}